\title{\textbf{A discrete Hardy's uncertainty principle and discrete evolutions}}
\author{Aingeru Fern\'andez-Bertolin}
\keywords{}
\date{\today}
\address{A. Fern\'andez-Bertolin:Departamento de Matem\'aticas,  Universidad del Pa\'is Vasco UPV/EHU, apartado 644, 48080, Bilbao, Spain}
\email{aingeru.fernandez@ehu.eus}
\newtheorem{teo}{Theorem}[section]
\newtheorem{cor}{Corollary}[section]
\theoremstyle{remark}
\newtheorem{rem}{Remark}[section]
\begin{document}

\begin{abstract}
In this paper we give a discrete version of Hardy's uncertainty principle, by using complex variable arguments, as in the classical proof of Hardy's principle. Moreover, we give an interpretation of this principle in terms of decaying solutions to the discrete Schr\"odinger and heat equations.
\end{abstract}
\maketitle

\section{Introduction}

In functional analysis, uncertainty principles are, in general, results that state that a function and its Fourier transform cannot decay too fast simultaneously. The most studied uncertainty principle, which comes back to Heisenberg, says that
\[
\frac{2}{d}\left(\int_{\mathbb{R}^d}|xf(x)|^2\,dx\right)^{1/2}\left(\int_{\mathbb{R}^d}|\nabla f(x)|^2\,dx\right)^{1/2}\ge \int_{\mathbb{R}^d}|f(x)|^2\,dx,
\]
and, moreover, the equality is attained if and only if $f(x)=Ce^{-\alpha|x|^2/2},$ for $\alpha>0$. In the discrete setting there are some versions of this inequality. For instance, in a previous paper \cite{fb}, we studied an inequality discretizing the position and momentum operators (see also \cite{an,cn,gg} for more references to this uncertainty principle). We saw how we can recover the classical minimizer (i.e. the Gaussian) from the minimizer of the discrete uncertainty principle, which is given in terms of modified Bessel functions
\[
I_k(x)=\frac{1}{\pi}\int_0^\pi e^{z\cos\theta}\cos(k\theta)\,d\theta,\ \ k\in\mathbb{Z}.
\]

In this paper, we are interested in giving a discrete version of Hardy's uncertainty principle \cite{dm,ss} in one dimension
\[
|f(x)|\le Ce^{-x^2/2\alpha},\ \ |\hat{f}(\xi)|\le Ce^{-\xi^2/2\beta}, \ \text{with }\alpha\beta<1\Rightarrow f\equiv0,
\]
and, in the case $\alpha\beta=1$ then $f(x)=Ce^{-x^2/2\alpha}.$ Thus, this uncertainty principle is telling us that a function and its Fourier transform cannot have both Gaussian decay for some coefficients $\alpha,\beta$ The original proof of this principle is strongly based on complex variable arguments (Phragmen-Lindel\"of's principle and Liouville's theorem). Moreover, if we consider a solution to the one-dimensional free Schr\"odinger equation
\[
\left\{\begin{array}{l}\partial_tu=i\partial_{xx}u,\\u(x,0)=u_0(x),\end{array}\right.
\]
we can write the solution as
\[
u(x,t)=\frac{e^{ix^2/4t}}{\sqrt{it}}\left(e^{i(\cdot)^2/4t}u_0\right)^\wedge\left(\frac{x}{2t}\right).
\] 
Basically, this says that the solution is the Fourier transform of the initial datum, so we can write Hardy's uncertainty principle in terms of solutions to the Schr\"odinger equation,
\begin{equation}\label{hsc}
u(x,0)=O(e^{-x^2/2\alpha}),\ \ u(x,1)=e^{i\partial_{xx}}u(x,0)=O(e^{-x^2/2\beta}),\ \ \alpha\beta<4\Rightarrow u\equiv0,
\end{equation}
having a precise expression for $u_0$ in the case $\alpha\beta=4.$ On the other hand, it can also be stated in terms of solutions to the heat equation, as follows:
\begin{equation}\label{hhe}
f\in L^2(\mathbb{R}),\ e^{x^2/2\delta}e^{\partial_{xx}}f\in L^2(\mathbb{R}) \text{ for some }\delta\le 2 \Rightarrow f\equiv0. 
\end{equation}

Under these terms, there is a series of papers \cite{ekpv1,ekpv2,ekpv3,ekpv4} and \cite{cekpv} where the authors prove Hardy's uncertainty principle for perturbed Schr\"odinger and heat equations just by using real calculus arguments. In the case of the Schr\"odinger equation, they are able to get a result up to the endpoint case. However, for the heat equation their result does not cover the whole case.

Here first we are going to see, using complex variable arguments, a version of Hardy's uncertainty principle in a discrete setting, saying that if a complex function is controlled in some region of the complex plane by a function that will be closely related to the Gaussian, and its Fourier coefficients are controlled precisely by the minimizer we got in \cite{fb} (i.e. by the modified Bessel function $I_k(x)$), then in some cases we have that the function is zero, or we can give a precise expression for the function. This is not exactly stated as Hardy's classical principle, since now, by using Fourier series we relate the sequence to a periodic function in an interval of the form $[-\pi/h,\pi/h]$, while in the continuous case the relation one gets using the Fourier transform is between functions in the real line. Due to this periodicity, just knowing the behavior of the function in that interval does not give us any information.

Once we have this result, we want to give some uncertainty principles in terms of the free Schr\"odinger and the heat equations, concluding that solutions to these equations cannot decay too fast at two different times. We are going to use two different approaches to prove these results. First, we take advantage of what is known in the continuous case, so we need to assume that the initial datum of the discrete equation is similar to a function on the real line. In this case, it is not reasonable to expect that the solution to the discrete equation is identically zero, but it tends to the zero function in some sense as $h$, the mesh step, tends to zero. However, when this mesh step is fixed, we can use the discrete version of the uncertainty principle we mention above in order to see that under some cases this solution is identically zero. As it is pointed out in \eqref{hsc}, the condition on the decay coefficients in the continuous case is $\alpha\beta<4$, while, the discrete approach we are going to study here leads to a more restrictive condition for $\alpha$ and $\beta$, $\alpha+\beta<2$. Although this is a clear difference between the continuous and the discrete case, we are going to see here that this is the sharp condition for our results.

When preparing this manuscript, we learn about a recent and independent result in this direction \cite{jlmp}. There, the authors use complex variable arguments to prove a sharp analog of Hardy's uncertainty principle considering solutions to the discrete Schr\"odinger equation. On the other hand, they also use the real variable approach in \cite{ekpv1,ekpv2,ekpv3,ekpv4} in order to add real-valued time-dependent potentials. There is overlap between their results and those presented here in Section 4, although the proofs of the results are different.

The paper is organized as follows: In Section 2 we give a general result in the discrete case that is related to Hardy's uncertainty principle. In Section 3 we give a version of Hardy's uncertainty principle for the discrete Schr\"odinger and discrete heat equations, by relating these equations to the continuous equations, noticing that we should not expect the solutions to be identically zero using this approach. In Section 4 we use the results of Section 2 to give discrete versions of Hardy's uncertainty principle in the spirit of the results in Section 3 without using information about the continuous case and, moreover, we give some examples of discrete data that prove the sharpness of the decay conditions. Finally, in an appendix we give some examples of nonzero functions that satisfy the hypotheses of Theorem 2.1 when the statement of the theorem does not give any extra information about the function.

\section{A discrete uncertainty principle using complex variable arguments}

For $h>0$, that represents the mesh size, we define the Fourier coefficients of a $\frac{2\pi}{h}$-periodic function $f_h$ in the following way:
\[
\hat{f}_h(k)=\frac{1}{\sqrt{2\pi}}\int_{-\pi/h}^{\pi/h}f(\xi)e^{-i\xi kh}\,d\xi,\ \ \ f_h(x)=\frac{h}{\sqrt{2\pi}}\sum_{k\in\mathbb{Z}}\hat{f}_h(k)e^{ihkx}.
\]

There is a result concerning the extension of this function $f_h$ to the complex plane. Roughly speaking, this result states that a function that decays faster than $e^{-a|x|}, \forall a>0$, can be extended to the complex plane as an entire function. When the Fourier coefficients are of the form $I_k(u/h^2)$ for $u\in\mathbb{C}$ we are in this case, and we have that the corresponding periodic function is
\[
\frac{h}{\sqrt{2\pi}}\sum_{k\in\mathbb{Z}}I_k(u/h^2)e^{ihk(x+i y)}=\frac{h}{\sqrt{2\pi}}e^{u\cos(x+iy)/h^2}.
\]

So we are going to see that if the Fourier coefficients decay faster than the modified Bessel function, and on the other hand the periodic function $f_h$ is controlled by some function that is related to the one we have computed above, then in some cases the function will be zero, according to $|u|$ and the argument of the Bessel function.

\begin{teo}
Assume that $f_h$ is a complex-valued function and that there are $u=re^{i\theta}\in\mathbb{C},\ b\in[0,2\pi),\ \delta\in(0,\pi/2),$ and $s>0$ such that, for all $y\le 0$,
\[\begin{aligned}
&\left|f_h\left(\frac{b-\theta+\frac{\pi}{2}+\delta}{h}+i y\right)\right|\le Ce^{\frac{\Re u}{h^2}\cos(\theta-\frac{\pi}{2}-\delta)\cosh(yh)-\frac{\Im u}{h^2}\sin(\theta-\frac{\pi}{2}-\delta)\sinh(yh)},
\\&\left|f_h\left(\frac{b-\theta-\frac{\pi}{2}-\delta}{h}+i y\right)\right|\le Ce^{\frac{\Re u}{h^2}\cos(\theta+\frac{\pi}{2}+\delta)\cosh(yh)-\frac{\Im u}{h^2}\sin(\theta+\frac{\pi}{2}+\delta)\sinh(yh)},
\end{aligned}\]
and, for all $y\ge0$,
\[\begin{aligned}
&\left|f_h\left(\frac{b+\theta+\frac{\pi}{2}+\delta}{h}+i y\right)\right|\le Ce^{\frac{\Re u}{h^2}\cos(\theta+\frac{\pi}{2}+\delta)\cosh(yh)+\frac{\Im u}{h^2}\sin(\theta+\frac{\pi}{2}+\delta)\sinh(yh)},
\\&\left|f_h\left(\frac{b+\theta-\frac{\pi}{2}-\delta}{h}+i y\right)\right|\le Ce^{\frac{\Re u}{h^2}\cos(\theta-\frac{\pi}{2}-\delta)\cosh(yh)+\frac{\Im u}{h^2}\sin(\theta-\frac{\pi}{2}-\delta)\sinh(yh)}.
\end{aligned}\]

Moreover,  assume that the Fourier coefficients of $f_h$ satisfy
\[
|\hat{f_h}(k)|\le CI_k\left(\frac{1}{sh^2}\right),\ \ \forall k\in\mathbb{Z}.
\]

Then:
\begin{enumerate}
\item $rs<1\Rightarrow$ There are nonzero functions that satisfy the hypotheses. (See the appendix below)
\item $rs=1\Rightarrow f_h(z)=Ce^{\frac{u}{h^2}\cos(zh-b)}$.
\item $rs>1\Rightarrow f_h(z)\equiv 0.$ 

\end{enumerate}
\end{teo}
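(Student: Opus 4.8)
The plan is to imitate the classical proof of Hardy's principle, using the $2\pi/h$-periodicity of $f_h$ to reduce matters to a Phragm\'en--Lindel\"of argument on sectors. First I would check that $f_h$ extends to an entire function: for every $R>0$ the generating identity for the modified Bessel functions gives $\sum_{k}I_k(1/(sh^2))R^{|k|}\le 2\,e^{\frac1{2sh^2}(R+R^{-1})}<\infty$, so the bound on the Fourier coefficients implies that $F(\zeta):=\frac{h}{\sqrt{2\pi}}\sum_k\hat f_h(k)\zeta^k$ converges on $\mathbb C\setminus\{0\}$, that $f_h(z)=F(e^{ihz})$ is entire, and that
\[
|F(\zeta)|\le C\,\tfrac{h}{\sqrt{2\pi}}\,e^{\frac1{2sh^2}(|\zeta|+|\zeta|^{-1})},\qquad \zeta\in\mathbb C\setminus\{0\}.
\]
Next I would set $g(z)=f_h(z)e^{-\frac{u}{h^2}\cos(zh-b)}$; since both factors are $2\pi/h$-periodic, $g$ descends to a holomorphic function $H(\zeta)=F(\zeta)\exp\!\bigl(-\tfrac{u}{2h^2}(e^{-ib}\zeta+e^{ib}\zeta^{-1})\bigr)$ on $\mathbb C\setminus\{0\}$, using $\cos(zh-b)=\tfrac12(e^{-ib}\zeta+e^{ib}\zeta^{-1})$. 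A direct computation then shows that the four hypotheses are exactly the assertions $|H|\le C$ on the two rays $\arg\zeta=b-\theta\pm(\tfrac\pi2+\delta)$ with $|\zeta|\ge1$ and on the two rays $\arg\zeta=b+\theta\pm(\tfrac\pi2+\delta)$ with $|\zeta|\le1$: for instance along $z=\frac{b-\theta+\pi/2+\delta}{h}+iy$ one has $\cos(zh-b)=\cos(\theta-\tfrac\pi2-\delta)\cosh(yh)+i\sin(\theta-\tfrac\pi2-\delta)\sinh(yh)$, so $\Re\bigl(\tfrac{u}{h^2}\cos(zh-b)\bigr)$ is precisely the exponent on the right of the first inequality, and similarly for the others.

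Then I would run the Phragm\'en--Lindel\"of step. From the estimate for $F$,
\[
|H(\zeta)|\le C\exp\!\Bigl(\tfrac1{2h^2}\bigl[|\zeta|\bigl(\tfrac1s-r\cos(\theta-b+\arg\zeta)\bigr)+|\zeta|^{-1}\bigl(\tfrac1s-r\cos(\theta+b-\arg\zeta)\bigr)\bigr]\Bigr),
\]
so on $\{|\zeta|\ge1\}$ we get $|H(\zeta)|\le C'e^{\frac{|\zeta|}{2h^2}(\frac1s+r)}$, of order $1$, while on the bisecting ray $\arg\zeta=b-\theta$ the coefficient of $|\zeta|$ equals $\frac1s-r$, which is $\le0$ exactly when $rs\ge1$, so there (using $|\zeta|\ge1$) $|H|$ is bounded by a constant. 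Hence, when $rs\ge1$, $H$ is bounded on the three rays $\arg\zeta\in\{b-\theta-\tfrac\pi2-\delta,\ b-\theta,\ b-\theta+\tfrac\pi2+\delta\}$ with $|\zeta|\ge1$, and on the arc $|\zeta|=1$ (where $|H|$ is finite since $\sum_k|\hat f_h(k)|<\infty$ and $|\cos(zh-b)|\le1$ for real $z$); these rays split $\{|\zeta|\ge1\}$ into sectorial regions of openings $\tfrac\pi2+\delta,\ \tfrac\pi2+\delta,\ \pi-2\delta$, all in $(0,\pi)$ because $\delta\in(0,\tfrac\pi2)$, so Phragm\'en--Lindel\"of for a sector yields $|H|\le C$ on all of $\{|\zeta|\ge1\}$. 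The symmetric argument with the rays $\arg\zeta=b+\theta$ and $\arg\zeta=b+\theta\pm(\tfrac\pi2+\delta)$ (after the inversion $\zeta\mapsto\zeta^{-1}$, which sends the essential singularity at $0$ to infinity) gives $|H|\le C$ on $\{|\zeta|\le1\}$. Thus $H$ is bounded on $\mathbb C\setminus\{0\}$, so its singularity at $0$ is removable and $H$ extends to a bounded entire function, hence $H\equiv C$ by Liouville, that is $f_h(z)=Ce^{\frac{u}{h^2}\cos(zh-b)}$.

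Finally I would separate the three alternatives through the Fourier coefficients. From $f_h(z)=Ce^{\frac{u}{h^2}\cos(zh-b)}=C\sum_k I_k(u/h^2)e^{ik(zh-b)}$ one reads off $\hat f_h(k)=C\frac{\sqrt{2\pi}}{h}I_k(u/h^2)e^{-ikb}$; bounding the tail of the series defining $I_k$ by $e^{|w/2|^2/(|k|+1)}-1$ gives $|I_k(w)|=\frac{|w/2|^{|k|}}{|k|!}(1+o(1))$ as $|k|\to\infty$, whence $|I_k(u/h^2)|/I_k(1/(sh^2))\sim (rs)^{|k|}$. Therefore the hypothesis $|\hat f_h(k)|\le CI_k(1/(sh^2))$ is compatible with $C\ne0$ when $rs=1$, giving alternative (2), but forces $C=0$, i.e.\ $f_h\equiv0$, when $rs>1$, giving alternative (3); when $rs<1$ it suffices to exhibit the examples of the appendix, which is alternative (1).

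The step I expect to be the real obstacle is the Phragm\'en--Lindel\"of argument: the sector naturally delimited by the two rays coming from the hypotheses has opening $\pi+2\delta>\pi$, too wide for an order-$1$ bound, and the crux is the observation that the growth estimate \emph{itself} supplies the missing control on the bisecting ray precisely in the range $rs\ge1$, so that the wide sector can be cut into two admissible pieces; carrying this out rigorously, together with the analogous estimates near $\zeta=0$ and the (harmless but delicate) treatment of the unit-circle arc as part of the boundary, is the heart of the proof, while the Bessel asymptotics in the last step are routine.
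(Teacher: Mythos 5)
Your proposal is correct and follows essentially the same route as the paper: entire extension of $f_h$ from the Bessel bound on the coefficients, boundedness of $f_h(z)e^{-\frac{u}{h^2}\cos(zh-b)}$ on the four hypothesis lines together with the two bisecting lines $\Re z=(b\pm\theta)/h$ (where the global growth estimate supplies the missing control exactly when $rs\ge1$), Phragm\'en--Lindel\"of on the resulting six regions of angular width less than $\pi$, and Liouville. The only cosmetic differences are that you work in the variable $\zeta=e^{ihz}$ and invoke the standard sector form of Phragm\'en--Lindel\"of where the paper works directly in period strips with an explicit auxiliary factor $e^{-i\epsilon\cos^2(Tz)}$, and that you dispose of case (3) by the asymptotics $|I_k(u/h^2)|/I_k(1/(sh^2))\sim(rs)^{|k|}$ applied to the explicit solution, rather than the paper's reduction to case (2) with the shrunken datum $\tilde u=e^{i\theta}/s$.
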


\begin{rem}
The hypothesis on the periodic function is telling us that we know how the function behaves near the critical lines where the function $e^{\frac{u}{h^2}\cos(zh-b)}$ has the greatest decay.
\end{rem}

\begin{rem}
In the applications of this theorem, we are going to consider $\delta$ close to $\pi/2$.
\end{rem}

\begin{proof}
To begin with, we show that the case $rs>1$ can be proved as a consequence of the case $rs=1$, as in Hardy's original theorem. Indeed, if $r>\frac1s$, the hypotheses of the theorem are satisfied for $\tilde{u}=\frac{e^{i\theta}}{s}$ and $s$, and in this case $\tilde{r}s=1$. To see this, let us consider $y\le0$, so that ($C$ is a constant that may change from line to line)
\[\begin{aligned}
\left|f_h\left(\frac{b-\theta+\frac{\pi}{2}+\delta}{h}+i y\right)\right|&\le Ce^{\frac{\Re u}{h^2}\cos(\theta-\frac{\pi}{2}-\delta)\cosh(yh)-\frac{\Im u}{h^2}\sin(\theta-\frac{\pi}{2}-\delta)\sinh(yh)}
\\&\le Ce^{re^{-yh}\cos(\pi/2+\delta)/2h^2}\le Ce^{e^{-yh}\cos(\pi/2+\delta)/2sh^2}
\\&\le Ce^{\frac{\Re \tilde{u}}{h^2}\cos(\theta-\frac{\pi}{2}-\delta)\cosh(yh)-\frac{\Im \tilde{u}}{h^2}\sin(\theta-\frac{\pi}{2}-\delta)\sinh(yh)},
\end{aligned}\]
since $y$ is negative and $\cos(\pi/2+\delta)$ is also negative. For the other three lines that we have to control, we can use the same argument. Hence, by (2) in the theorem, we have that $f_h(z)=Ce^{\frac{\tilde{u}}{h^2}\cos\big((z-b)h\big)}$. But then the hypotheses of the theorem are not satisfied, unless $C=0\Rightarrow f_h\equiv0$. 

Thus, we only need to prove that when $rs=1$ we can determine completely the function by the properties of the hypotheses. The proof of this fact relies on the maximum principle and Liouville's theorem, so it is quite similar to the original proof of Hardy's uncertainty principle. As we have pointed out above, the condition for the Fourier coefficients implies that $f_h$ is a $2\pi/h$-periodic function and the extension
\[
f_h(z)=\frac{h}{\sqrt{2\pi}}\sum_{k\in\mathbb{Z}}\hat{f_h}(k)e^{ihkz},\ \ \ z=x+iy,
\]
is an entire function, so we have, for all $z$ in the plane
\[
|f_h(z)|\le C h\sum_{k\in\mathbb{Z}}I_k\left(\frac{r}{h^2}\right)e^{-hky}=Che^{\frac{r}{h^2}\cosh (yh)},
\]

We split up a strip of length $2\pi/h$ in six regions following the figures below (being the first one for the upper half plane and the second one for the lower half plane, and the boundary of each region is determined by the dashed red lines and the line $\{\Re z=0\}$):

\begin{tikzpicture}[domain=0:4]
\draw[->] (6,0) -- (18,0) node[right] {$\Re z$};
\draw[dashed,color=red] (10.75,3) -- (10.75,0) node[below] {$\frac{b+\theta}{h}$};
\draw[dashed,color=red] (17,3) --(17,0) node[below]{$\frac{b+\theta+3\pi/2-\delta}{h}$};
\draw[dashed,color=red] (15,3) --(15,0) node[below]{$\frac{b+\theta+\pi/2+\delta}{h}$};
\draw[dashed,color=red] (6.5,3) --(6.5,0) node[below]{$\frac{b+\theta-\pi/2-\delta}{h}$};
\node [rectangle,draw] (a) at (8.625,1.5) {\textcolor{red}1};
\node [rectangle,draw] (a) at (12.875,1.5) {\textcolor{red}2};
\node [rectangle,draw] (a) at (16,1.5) {\textcolor{red}3};
\end{tikzpicture}

\begin{tikzpicture}[domain=0:4]
\draw[->] (6,0) -- (18,0) node[right] {$\Re z$};
\draw[dashed,color=red] (10.75,0) -- (10.75,-3) node[below] {$\frac{b-\theta}{h}$};
\draw[dashed,color=red] (17,0) --(17,-3) node[below]{$\frac{b-\theta+3\pi/2-\delta}{h}$};
\draw[dashed,color=red] (15,0) --(15,-3) node[below]{$\frac{b-\theta+\pi/2+\delta}{h}$};
\draw[dashed,color=red] (6.5,0) --(6.5,-3) node[below]{$\frac{b-\theta-\pi/2-\delta}{h}$};
\node [rectangle,draw] (a) at (8.625,-1.5) {\textcolor{red}4};
\node [rectangle,draw] (a) at (12.875,-1.5) {\textcolor{red}5};
\node [rectangle,draw] (a) at (16,-1.5) {\textcolor{red}6};
\end{tikzpicture}

The procedure is very simple. In each region, we first multiply $f_h$ by a nice function (which will depend on a parameter $\epsilon$) that decays when $y$ tends to infinity in the unbounded part of the region. Then we see that the product is bounded at the boundary of the region, so we can apply the maximum principle and let $\epsilon$ tend to zero. We do not want the sign of the real part of this function to change in each region, so we look for the behavior of $\sin x\cos x$ on $[\pi/2,\pi]$. We illustrate this in region 1, since the procedure is exactly the same. In this region we consider the function
\[
g_\epsilon(x+iy)=f_h(x+iy)e^{-\frac{u}{h^2}\cos\left(zh-b\right)-i\epsilon \cos^2(Tz)},
\]
where $Tz=a_1z+b_1$ is the linear transformation which maps the interval $[\frac{b+\theta-\pi/2-\delta}{h},\frac{b+\theta}{h}]$ into $[\pi/2,\pi]$.

Hence, we have that
\[
|g_\epsilon(x+iy)=|f_h(x+iy)|e^{\frac{-\Re u}{h^2}\cos(xh-b)\cosh(yh)-\frac{\Im u}{h^2}\sin(xh-b)\sinh(yh)+2\epsilon\phi(x,y)},
\]
where $\phi(x,y)=\cos(a_1x+b_1)\sin(a_1x+b_1)\cosh(a_1y)\sinh(a_1y)$.

We can see that at the boundary of the region we are considering, this function is bounded. Moreover, a simple computation shows that $\frac{2a_1}{h}=\frac{\pi}{\pi/2+\delta}>1$, and this fact tells  us that when $y$ tends to infinity, the leading term is given by the $\epsilon$ part, which is negative. Hence the function is bounded when $y$ is large and we can apply the maximum principle to get that $|g_\epsilon(z)|\le C$. Letting $\epsilon$ tend to zero we conclude that, for all $z$ in region 1,
\[
|f_h(z)e^{-\frac{u}{h^2}\cos(zh-b)}|\le C. 
\] 

When we repeat this procedure in the other regions, we have that $|f_h(z)e^{-\frac{u}{h^2}\cos(zh-b)}|\le C$ in the two strips described in the figure above. One is a strip of length $2\pi/h$ of the upper half plane, and the other one is another strip of length $2\pi/h$ of the lower half plane. By periodicity, this estimate holds for all $z$ in the plane, and by Liouville's theorem this implies that $f(z)=Ce^{\frac{u}{h^2}\cos(zh-b)}$.
\end{proof}

In Section 4 we will use this theorem in order to see that a solution to a discrete equation cannot decay faster than some modified Bessel functions at two different times. What we have to see is the evolution of this equation in the periodic setting, by considering that the solution is given as the Fourier coefficients of some periodic function. The behavior at one time and the expression of the solution in the periodic setting will give us bounds for the function in some vertical lines of the complex plane, so we will use these bounds and the decay of the function at the other time to conclude that the function is zero.

\section{A discrete Hardy's theorem based on the classical Hardy's uncertainty principle}

We are going to consider a solution to the Schr\"odinger equation
\begin{equation}\label{sch}
\partial_t f_k^h(t)=i\frac{f_{k+1}^h(t)-2f_k^h(t)+f_{k-1}^h(t)}{h^2},
\end{equation}
where, in order to use the continuous uncertainty principle in the discrete setting, we are going to assume that the initial datum $f_k^h(0)$ is not far from the evaluation of a continuous function $u_0(x)$, that we will use as initial datum for the continuous Schr\"odinger equation. More precisely, we require that there is a function $u_0$ such that
\[
\sum_{k\in\mathbb{Z}}|f_k^h(0)-u_0(k h)|^2<h^\mu,
\]
for some $\mu>0$. In order to be able to evaluate the function, we are going to require $u_0\in H^s(\mathbb{R})$, for some $s>1/2$. Moreover, this is telling us that at any tme $t$, the solution to the discrete Schr\"odinger equation with initial datum $f_k^h(0)$ is not far from the solution to the discrete equation with initial datum $z_k^h(0)=u_0(kh)$, since the $\ell^2$ norm is preserved in time.

Now what we want to see is how we can relate the solution to a discrete equation to a solution to its continuous version. Since we are going to give an $\ell^\infty$ version of the uncertainty principle, we want to estimate the $\ell^\infty$ norm of the difference between the evaluation of the solution to the continuous equation at time $t$ in the mesh $\{kh\ :\ k>0\}$ with initial data $u_0(x)$ and the solution to the discrete equation with initial data $u_0(kh)$. Since $u_0\in H^{s}$ with $s>1/2$, we have that
\[
\int_{\mathbb{R}}(1+|\xi|)^{(2s-1)/4}|\hat{u}_0(\xi)|\,d\xi <+\infty,
\] 
and using similar arguments as in \cite{str}, where this type of equations is studied when the time is also discrete, we can get that
\begin{equation}\label{cdis}
|u(kh,1)-z_k^h(1)|\le h^{(2s-1)/8}\int_{\mathbb{R}}(1+|\xi|)^{(2s-1)/4}|\hat{u}_0(\xi)|\,d\xi .
\end{equation}

 This is the easiest way to talk about convergence of the discrete equation to the continuous one. However, in \cite{iz} the authors give estimates for a vast variety of Banach spaces.

Under these circumstances, we have the following result:

\begin{teo}
Let $f_m^h(t)$ be a solution to the discrete Schr\"odinger equation \eqref{sch}, and assume that there is $u_0\in H^{s}$ with $s>1/2$ such that for some $\mu>0$,
\begin{equation}\label{mesh}
\sum_{k\in\mathbb{Z}}|f_k^h(0)-u_0(k h)|^2<h^\mu.
\end{equation}

If for some $\alpha,\beta$ satisfying $\alpha\beta<4$ we have, for all $k\in\mathbb{Z}$ and $h>0$,
\begin{equation}\label{decay}
|f_k^h(0)|\le C\frac{I_k(\alpha/h^2)}{I_0(\alpha/h^2)},\ \ |f_k^h(1)|\le C\frac{I_k(\beta/h^2)}{I_0(\beta/h^2)},
\end{equation}
then $u_0\equiv0$ and $|f_k^h(t)|$ tends to zero uniformly in $k$ when $h$ tends to zero.
\end{teo}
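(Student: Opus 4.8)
The plan is to push the decay hypotheses \eqref{decay} from the discrete data onto the continuous function $u_0$, apply the classical Schr\"odinger form of Hardy's principle \eqref{hsc}, and then feed the conclusion back into the discrete solution via conservation of the $\ell^2$ norm.

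First I would convert the $\ell^2$-closeness \eqref{mesh} into a uniform pointwise estimate: for every $k\in\mathbb{Z}$,
\[
|f_k^h(0)-u_0(kh)|\le\Big(\sum_{j\in\mathbb{Z}}|f_j^h(0)-u_0(jh)|^2\Big)^{1/2}<h^{\mu/2}.
\]
Because $u_0\in H^s(\mathbb{R})$ with $s>1/2$ it is continuous, so, fixing $x\in\mathbb{R}$ and letting $k=k(h)$ be the nearest integer to $x/h$ (hence $k(h)h\to x$), the first bound in \eqref{decay} and the triangle inequality give
\[
|u_0(x)|=\lim_{h\to0}|u_0(k(h)h)|\le\limsup_{h\to0}\Big(|f_{k(h)}^h(0)|+h^{\mu/2}\Big)\le C\,\limsup_{h\to0}\frac{I_{k(h)}(\alpha/h^2)}{I_0(\alpha/h^2)}.
\]
At this point I would invoke the uniform large-argument asymptotics $I_k(z)\sim\frac{e^z}{\sqrt{2\pi z}}\,e^{-k^2/(2z)}$, valid as $z\to\infty$ while $k^2/z$ stays bounded; with $z=\alpha/h^2$ and $k(h)h\to x$ one has $k(h)^2/z\to x^2/\alpha$, so $I_{k(h)}(\alpha/h^2)/I_0(\alpha/h^2)\to e^{-x^2/(2\alpha)}$ and therefore $|u_0(x)|\le Ce^{-x^2/(2\alpha)}$.

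Next I would repeat the argument at $t=1$. Let $z_k^h(t)$ solve \eqref{sch} with datum $z_k^h(0)=u_0(kh)$, and let $u(x,t)=e^{i\partial_{xx}}u_0$ be the continuous evolution. Since \eqref{sch} preserves the $\ell^2$ norm, $\sum_k|f_k^h(1)-z_k^h(1)|^2=\sum_k|f_k^h(0)-u_0(kh)|^2<h^\mu$, so $|f_k^h(1)-z_k^h(1)|<h^{\mu/2}$ uniformly in $k$; combining this with the consistency estimate \eqref{cdis} (whose right-hand side is finite because $u_0\in H^s$ with $s>1/2$, and $\to0$ as $h\to0$) and with the second bound in \eqref{decay}, the same discretization limit yields $|u(x,1)|\le Ce^{-x^2/(2\beta)}$. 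Now $u_0$ satisfies $|u_0(x)|\le Ce^{-x^2/(2\alpha)}$ and $|e^{i\partial_{xx}}u_0(x)|\le Ce^{-x^2/(2\beta)}$ with $\alpha\beta<4$, so by the strict case of \eqref{hsc} we conclude $u_0\equiv0$.

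With $u_0\equiv0$ we have $z_k^h(0)=0$ for all $k$, hence $z_k^h(t)\equiv0$, and conservation of $\|f^h(t)-z^h(t)\|_{\ell^2}$ gives, for every $t$,
\[
\sum_{k\in\mathbb{Z}}|f_k^h(t)|^2=\sum_{k\in\mathbb{Z}}|f_k^h(0)-u_0(kh)|^2<h^\mu,
\]
so $\sup_k|f_k^h(t)|<h^{\mu/2}\to0$ as $h\to0$, which is the claimed uniform convergence. The main obstacle is the first step: making the passage to the limit in the Bessel ratio rigorous precisely in the regime where $k$ and $\sqrt{\alpha}/h$ are of comparable size, i.e. controlling the uniform asymptotic expansion of $I_k(\alpha/h^2)$ sharply enough to identify the limit for each fixed $x$. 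Everything else is careful bookkeeping with the two small parameters $h^{\mu/2}$ and $h^{(2s-1)/8}$.
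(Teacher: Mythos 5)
Your proposal is correct and follows essentially the same route as the paper: transfer the discrete decay to Gaussian bounds on $u_0$ and $u(\cdot,1)$ via the limit $I_{k}(\alpha/h^2)/I_0(\alpha/h^2)\to e^{-x^2/2\alpha}$, invoke the continuous Hardy principle \eqref{hsc}, and finish with $\ell^2$ conservation. The only (cosmetic) difference is that you take the nearest grid point to a fixed $x$ and use continuity of $u_0\in H^s$, $s>1/2$, whereas the paper chooses the mesh sizes $h=x/j$ (and $h=x/(\lceil x\rceil^4 j)$ for $|x|\ge1$) so that $x$ is exactly a grid node, citing its earlier work for the uniform convergence of the Bessel ratio that you obtain from the standard uniform asymptotics of $I_k$.
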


\begin{proof}
Since we want to use the continuous result, what we need to prove is that $u_0$ and $u(x,1)=e^{i\partial_{xx}}u_0$ are controlled by some Gaussians. We have to distinguish several cases according to $x$.

First, if $x=0$ we have that $|f_0^h(0)-u_0(0)|\le h^{\mu/2}$, so
\[
|u_0(0)|\le |f_0^h(0)|+h^{\mu/2}\le C+h^{\mu/2},
\]
and letting $h$ tend to zero, we conclude that $|u_0(0)|\le C$.

Now, if $0<x\le 1$ we have that $u_0(x)=u_0\left(j \frac{x}{j}\right)$ for all $j\in\mathbb{N}$, and then 
$|f_j^{x/j}(0)-u_0(x)|\le \frac{x^{\mu/2}}{j^{\mu/2}}\le \frac{1}{j^{\mu/2}}$. Hence
\[
|u_0(x)|\le |f_j^{x/j}(0)|+\frac{1}{j^{\mu/2}}\le C\frac{I_j(\alpha j^2/x^2)}{I_0(\alpha j^2/x^2)}+\frac{1}{j^{\mu/2}},
\]
and it was proved in \cite{fb} that this quotient of Bessel functions tend uniformly in $x$ to the Gaussian $e^{-x^2/2\alpha}$, so letting $j$ tend to infinity we have $|u_0(x)|\le Ce^{-x^2/2\alpha}$. Changing $j$ by $-j$ we can argue in the same fashion to get the same result in $-1\le x<0$.

Finally, if $1\le x$ we use that $u_0(x)=u_0\left( \lceil x\rceil^4 j \frac{x}{\lceil x\rceil^4 j}\right)$ for all $j\in\mathbb{N}$, and then, as in the previous case
\[
|u_0(x)|\le |f_{ \lceil x\rceil^4 j}^{x/ \lceil x\rceil^4 j}|+\frac{x^{\mu/2}}{\lceil x\rceil^{2\mu}j^{\mu/2}}\le C\frac{I_{\lceil x\rceil^4j}(\alpha j^2\lceil x\rceil^8/x^2)}{I_{0}(\alpha j^2\lceil x\rceil^8/x^2)}+\frac{1}{j^{\mu/2}},
\]
and we can use the same reasoning as in \cite{fb} to see that this quotient also converges to the Gaussian uniformly in $x$. Actually, this case is easier than the previous one, since in that case one has to show a uniform estimate over compact sets $|x|\le M$ and then see that if $M$ is large, for $M<|x|$ both functions are very small. In this case, we do not need to do this and it can be proved the uniform convergence in one step. Now letting $j$ tend to infinity we get that $|u_0(x)|\le Ce^{-x^2/2\alpha}$ in this region. Changing $j$ by $-j$ we prove the same when $x\le -1$ and gathering all the results we conclude that $|u_0(x)|\le Ce^{-x^2/2\alpha}$ for all $x\in\mathbb{R}$.

Now what we need to see is the evolution of $u_0(x)$ under the continuous Schr\"odinger equation. Since $u_0\in H^{s}$, with $s>1/2$, we  have that the solution to the continuous Schr\"odinger equation at the mesh is similar to the solution to the discrete Schr\"odinger equation when we discretize $u_0(x)$ by taking its values at the mesh, according to \eqref{cdis}. On the other hand, since the initial data $f_m^h(0)$ satisfies \eqref{mesh}, that is, it is close to the evaluation at the mesh of $u_0$, we have that at time $t$ $f_m^h(t)$ is also close to $z_m^h(t)$, since we have
\[
|f_m^h(t)-z_m^h(t)|<\left(\sum_{k\in\mathbb{Z}}|f_k^h(t)-z_k^h(t)|^2\right)^{1/2}=\left(\sum_{k\in\mathbb{Z}}|f_k^h(0)-u_0(kh)|^2\right)^{1/2}<h^{\mu/2}.
\]

Hence, we can do the same as we have done with $u_0(x)$ to see that it is bounded by another Gaussian. For instance, if $0<x\le 1$,
\[\begin{aligned}
|u(x,1)|&=|u\left(j\frac{x}{j},1\right)|\le |z_{j}^{x/j}(1)|+\frac{1}{j^{(2s-1)/8}}\\&\le |f_j^{x/j}(1)|+\frac{1}{j^{(2s-1)/8}}+\frac{1}{j^{\mu/2}}\le C\frac{I_j(\beta j^2/x^2)}{I_0(\beta j^2/x^2)}+\frac{1}{j^{(2s-1)/8}}+\frac{1}{j^{\mu/2}},
\end{aligned}\]
and letting $j$ tend to infinity we get $|u(x,1)|\le e^{-x^2/2\beta}$. Doing the same for the other regions we finally get that $|u(x,1)|\le e^{-x^2/2\beta}$ for all $x\in\mathbb{R}$, so we can use Hardy's uncertainty principle in its version for Schr\"odinger evolutions \eqref{hsc} to conclude that $u\equiv 0.$ In particular, this tells us that $u_0\equiv 0$ and then what we have is that $|f_k^h(t)|<h^{\mu/2}$ for all $k\in\mathbb{Z}$, so the solution to the discrete equation tends to zero uniformly in $k$ when $h$ tends to zero.
\end{proof}

As we see in the statement of the theorem, we do not prove that the discrete solution is $f_k^h(t)\equiv0$, but it is close to zero as $h$ is getting smaller. We can find examples where we see that we cannot expect to have a complete analog version to Hardy's uncertainty principle using this approach. For instance, if we take as initial datum
\[
f_k^h(0)=\frac{I_k(i/2h^2)}{I_0(5/2h^2)}\Rightarrow |f_k^h(0)|\le \frac{I_k(1/2h^2)}{I_0(5/2h^2)}< \frac{I_k(1/2h^2)}{I_0(1/2h^2)},
\]
since $|I_k(z)|\le I_k(|z|)$ and the modified Bessel function $I_0(x)$ is increasing when $0<x<\infty$. Thus $\alpha=\frac{1}{2}$, and, if we solve the discrete Schr\"odinger equation we get at time $t=1$,
\[
f_k^h(1)=e^{-2i/h^2}\sum_{m=-\infty}^\infty \frac{I_m(i/2h^2)}{I_0(5/2h^2)}I_{k-m}(2i/h^2)=e^{-2i/h^2}\frac{I_m(5i/2h^2)}{I_0(5/2h^2)}\Rightarrow |f_k^h(1)|\le \frac{I_m(5/2h^2)}{I_0(5/2h^2)},
\]
so in this case $\beta=\frac{5}{2}$ and $\alpha\beta=\frac{5}{4}<4$. Then by the theorem, if there exists $u_0$ such that \eqref{mesh} is satisfied, $u_0$ has to be identically zero. In this case it is quite easy to see that this happens. Indeed,
\[
\sum_{k\in\mathbb{Z}}|f_k^h(0)|^2=\frac{1}{I_0^2(5/2h^2)},
\]
and seeing the asymptotic behavior of $I_0(t)$ when $t$ is large, we see that \eqref{mesh} is satisfied for any $\mu>0$. Hence we have an example of a nonzero sequence that satisfies the hypotheses of the theorem. However, in the next section we will see that if we relax the condition on the coefficients $\alpha$ and $\beta$,  at least when $h$ is fixed we can give a result that tells us when $f_k^h$ is going to be identically zero.

On the other hand, if we consider as initial datum the sequence
\[
g_k^h(0)=(-1)^k \frac{I_k(1/h^2)}{I_0(1/h^2)},
\]
then in this case we are not going to have a function $u_0$ satisfying \eqref{mesh}, since the subsequences of the odd members and even members are discrete version of the functions $-e^{-x^2/2}$ and $e^{-x^2/2}$ respectively, as we can see in Figure \ref{fig1}. In this case, the solution to the Schr\"odinger equation is 

\begin{figure}[\h]\centering
\includegraphics{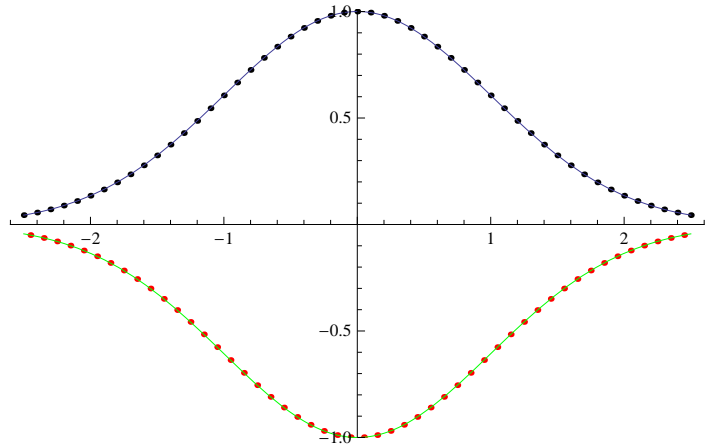}
\caption{Graphic representation of $g_k^h(0)$ when $h=\frac{1}{20}$ and $k\in[-50,50]$ The odd coefficients are plotted in red dots and the even coefficients in black dots. We see the convergence of those subsequences to their respective Guassians $\pm e^{-x^2/2}$ represented in blue and green lines.}\label{fig1}
\end{figure}

\[
g_k^h(1)=e^{-2i/h^2}\sum_{m=-\infty}^\infty(-1)^m \frac{I_m(1/h^2)}{I_0(1/h^2)}I_{k-m}(2i/h^2)=e^{2i/h^2}\frac{I_m((2i-1)/h^2)}{I_0(1/h^2)},
\]
and, at least numerically, the decay coefficient for $g_k^h(1)$ seems to be (see Figure \ref{fig2}) $\beta=5$. This is not very surprising, since the decay one obtains solving the continuous Schr\"odinger equation with initial datum $e^{-x^2/2}$ is precisely $e^{-x^2/10}$, so the coefficients in this case are $\alpha=1,\beta=5$ as well. On the other hand, this is telling us that maybe one can remove the hypothesis \eqref{mesh} on the existence of a proper function $u_0$ and having then that just the decay conditions \eqref{decay} imply that the solution to the discrete Schr\"odinger equation is getting smaller uniformly when $h$ tends to zero.

\begin{figure}[\h]\centering
\includegraphics{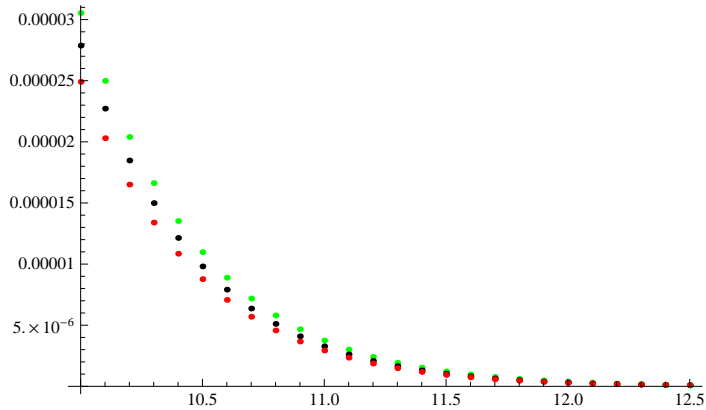}
\caption{Representation of $|g_k^h(1)|$ (black dots) when $h=\frac{1}{20}$ and $k\in[200,250]$ We see that it seems to be controlled by $\frac{I_k(\beta/h^2)}{5^{1/4}I_0(\beta/h^2)}$ for $\beta=5$ (green dots), but this is not the case if we take $\beta=4.9$ (red dots).}\label{fig2}
\end{figure}

Now, considering the discrete heat equation, we have similar results. Although in this case since we have a parabolic equation we can get better estimates for the convergence of the discrete system to the continuous equation, we are going to use the same estimates \eqref{cdis} as for the Schr\"odinger equation. Recall that we are comparing the discrete datum with the samples of a continuous function, and this is well-defined when the function belongs to $H^{s},s>1/2$, so although it could be possible to give some estimates depending on the norm of the function in a weaker space ($L^2$ for instance), the function has to have some degree of smoothness that makes useless the advantage of using these better estimates. On the other hand, in Theorem 3.1 we have used an $\ell^2$ estimate for the difference between the discrete initial datum and the continuous function, while now we can replace it by an $\ell^\infty$ estimate. Indeed, if we assume that a sequence $v_m^h$ satisfies $|v_m^h|\le h^\mu$ for some $\mu>0$ and for all $m\in\mathbb{Z}$, then, solving the discrete heat equation we get
\[
e^{t\Delta_d}v_m^h=e^{-2t/h^2}\sum_{k=-\infty}^\infty v_k^h I_{m-k}(2t/h^2)\Rightarrow |e^{t\Delta_d}v_m^h|\le h^\mu e^{-2t/h^2}\sum_{k=-\infty}^\infty I_{m-k}(2t/h^2)=h^\mu,
\]
so we have that the evolution of $v_m^h$ also satisfies the same estimate. In the case of the Schr\"odinger equation we do not have this property, and this is the reason why we have to consider an $\ell^2$ estimate, since we have the conservation of this norm.  Then the result we have is:
\begin{teo}
Let $v_m^h(t)$ be a solution to the discrete heat equation, and assume that there is $u_0\in H^{s}$ with $s>1/2$ such that for some $\mu>0$,
\begin{equation}\label{mesh2}
\sup_{k\in\mathbb{Z}}|v_k^h(0)-u_0(k h)|<h^\mu.
\end{equation}

If for some $\alpha$ satisfying $\alpha<2$ we have, for all $k\in\mathbb{Z}$ and $h>0$,
\begin{equation}\label{decay2}
|v_k^h(1)|\le C\frac{I_k(\alpha/h^2)}{I_0(\beta/h^2)},
\end{equation}
then $u_0\equiv0$ and $|v_k^h(t)|$ tends to zero uniformly in $k$ when $h$ tends to zero.
\end{teo}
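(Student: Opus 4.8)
The plan is to reduce Theorem 3.2 to the heat-equation form of Hardy's principle \eqref{hhe}, following the scheme of the proof of Theorem 3.1 but exploiting the fact that \eqref{hhe} requires only one decay hypothesis, together with $u_0\in L^2(\mathbb{R})$, which is free since $u_0\in H^s$ with $s>1/2$. Concretely, writing $u(x,t)=e^{t\partial_{xx}}u_0$ for the solution of the continuous heat equation with datum $u_0$, it suffices to show that $|u(x,1)|\le Ce^{-x^2/2\alpha}$ for all $x\in\mathbb{R}$: since $\alpha<2$ one may then choose $\delta\in(\alpha,2]$, for which $e^{x^2/2\delta}|e^{\partial_{xx}}u_0(x)|\le Ce^{x^2(1/\delta-1/\alpha)/2}$ lies in $L^2(\mathbb{R})$, and \eqref{hhe} forces $u_0\equiv0$.

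To obtain this Gaussian bound on $u(\cdot,1)$ I would transfer the decay \eqref{decay2} from the discrete to the continuous level exactly as in Theorem 3.1. Let $z_m^h(t)$ denote the discrete heat evolution of the samples $u_0(mh)$. The $\ell^\infty$ estimate proved just before the statement, which rests on the identity $e^{-2t/h^2}\sum_k I_{m-k}(2t/h^2)=1$, applied to the sequence $v_m^h(0)-u_0(mh)$ together with \eqref{mesh2} gives $\sup_m|v_m^h(1)-z_m^h(1)|<h^\mu$, while the estimate \eqref{cdis}, which holds verbatim for the heat semigroup (in fact with better rates, as remarked, although this is not needed), gives $|u(mh,1)-z_m^h(1)|\le h^{(2s-1)/8}\int_{\mathbb{R}}(1+|\xi|)^{(2s-1)/4}|\hat u_0(\xi)|\,d\xi$. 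Splitting into the cases $x=0$, $0<|x|\le1$ and $|x|\ge1$, and using the substitutions $x=j\,\tfrac{x}{j}$ and $x=\lceil x\rceil^4 j\,\tfrac{x}{\lceil x\rceil^4 j}$ as in the proof of Theorem 3.1, one bounds $|u(x,1)|$ by $C\,I_k(\alpha/h^2)/I_0(\alpha/h^2)$ (for the appropriate $k$ and $h$) plus errors that tend to $0$ as $j\to\infty$; invoking the uniform convergence of these quotients of modified Bessel functions to $e^{-x^2/2\alpha}$ established in \cite{fb}, this yields $|u(x,1)|\le Ce^{-x^2/2\alpha}$ on all of $\mathbb{R}$, as required.

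Once $u_0\equiv0$, the samples $u_0(mh)$ vanish, so $z_m^h(t)\equiv0$ and \eqref{mesh2} reads $\sup_k|v_k^h(0)|<h^\mu$; applying the $\ell^\infty$ contraction of the discrete heat semigroup one last time, $\sup_k|v_k^h(t)|<h^\mu$ for every $t\ge0$, which is exactly the claimed uniform decay of $|v_k^h(t)|$ as $h\to0$.

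The one genuinely new feature relative to Theorem 3.1, and the point where I expect to have to be careful, is that here the decay is imposed only at $t=1$; this is harmless precisely because \eqref{hhe} is designed for that situation, but it forces one to check that the hypothesis $\alpha<2$ is exactly what keeps a value $\delta\le2$ available in the argument above (and thereby that the threshold is sharp). The remaining ingredients — the adaptation of \eqref{cdis} to the parabolic case and the uniform Bessel-to-Gaussian asymptotics from \cite{fb} — are, respectively, routine and quoted, so once the reduction is set up the proof is essentially a transcription of the proof of Theorem 3.1.
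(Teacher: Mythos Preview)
Your proposal is correct and follows essentially the same route as the paper: transfer the decay \eqref{decay2} from the discrete to the continuous level exactly as in Theorem~3.1 (now using the $\ell^\infty$ contraction of the discrete heat semigroup in place of $\ell^2$ conservation, together with \eqref{cdis}), obtain $|u(x,1)|\le Ce^{-x^2/2\alpha}$, and then invoke \eqref{hhe} with some $\delta\in(\alpha,2]$. Your write-up is in fact more detailed than the paper's, which simply says ``repeat the procedure we use in Theorem~3.1'' and does not spell out the final $\ell^\infty$ step giving $\sup_k|v_k^h(t)|<h^\mu$.
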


\begin{proof}
Since we have that $u_0\in H^{s}$ we only need to see if $e^{x^2/2\delta}u(x,1)$ belongs to $L^2(\mathbb{R})$ for some $\delta\le 2$. Since we have the uniform estimate \eqref{mesh2} and the convergence of the solution to the discrete heat equation with initial datum $z_k^h=u_0(kh)$ to the solution to the continuous heat equation with initial datum $u_0(x)$ \eqref{cdis}, we can repeat the procedure we use in Theorem 3.1 to conclude that $|u(x,1)|\le e^{-x^2/2\alpha}$. Now, since $\alpha<2$, que can take $\delta$ such that $\alpha<\delta\le 2$ and then
\[
\int_{\mathbb{R}}e^{x^2/\delta}|u(x,1)|^2\,dx\le \int_{\mathbb{R}}e^{x^2(\frac{1}{\delta}-\frac{1}{\alpha})}<+\infty,
\]
so, by Hardy's uncertainty principle for heat evolutions \eqref{hhe} we conclude that $u_0\equiv0$.
\end{proof}

\section{A weaker discrete version of Hardy's uncertainty principle}

As in the classical result of Hardy, we want to have decay conditions on solutions to the discrete Schr\"odinger equation that make the solution to be identically zero. For this purpose, we are going to use results coming  from Section 2, where we use complex variable arguments to conclude that if a periodic function and its Fourier coefficients satisfy certain decay conditions, then the function is identically zero.

The first question is to choose the parameters $b$ and $\theta$ in Theorem 2.1 properly. For that, we rewrite the discrete Schr\"odinger equation \eqref{sch} in a periodic setting, thinking of $f_k^h(t)$ as the Fourier coefficients of a function $g_h(t)$. It is easy to see then that solving \eqref{sch} is the same as solving
\[
\partial_tg_h(x,t)=\frac{2i(\cos(xh)-1)}{h^2}g_h(x,t)\Rightarrow g_h(x,t)=e^{2i(\cos(xh)-1)/h^2}g_h(x,0),
\]
and then we observe that solving the discrete Schr\"odinger equation is basically multiply by the exponential $e^{i\cos(xh)}$, so we are going to take $b, \theta$ so that the conclusion of Theorem 2.1 is that if $rs=1$, then the function is a constant times $e^{\frac{ir}{h^2}\cos(zh)}$, that is, we take $b=0$ and $\theta=\frac{\pi}{2}$. Then we can rewrite Theorem 2.1 in order to have:

\begin{cor}
Assume that $g_h$ is a complex-valued function and that there are $r,s>0,\delta\in(0,\pi/2)$ such that,
\[\begin{aligned}
&\left|g_h\left(\frac{\delta}{h}+i y\right)\right|,\left|g_h\left(\frac{\pi-\delta}{h}+iy\right)\right|\le Ce^{\frac{r}{h^2}\sin\delta\sinh(yh)},\ \text{for all }y\le0\\
&\left|g_h\left(\frac{-\delta}{h}+i y\right)\right|,\left|g_h\left(\frac{\pi+\delta}{h}+iy\right)\right|\le Ce^{-\frac{r}{h^2}\sin\delta\sinh(yh)},\ \text{for all }y\ge0\\
&|\hat{g}_h(k)|\le CI_k\left(\frac{1}{sh^2}\right),\ \ \forall k\in\mathbb{Z}.\end{aligned}
\]

Then:
\begin{enumerate}
\item $rs<1\Rightarrow$ There are nonzero functions that satisfy the hypotheses.
\item $rs=1\Rightarrow g_h=Ce^{\frac{ir}{h^2}\cos(zh)}$ (and $\hat{g}_h(k)=CI_k(ir/h^2)$).
\item $rs>1\Rightarrow g_h\equiv 0.$ 

\end{enumerate}
\end{cor}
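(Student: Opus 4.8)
The plan is to deduce the corollary directly from Theorem 2.1 by specializing the parameters to $b=0$ and $\theta=\pi/2$, so that $u=re^{i\pi/2}=ir$, i.e.\ $\Re u=0$ and $\Im u=r$. With this choice the entire function $e^{\frac{u}{h^2}\cos(zh-b)}$ occurring in Theorem 2.1 becomes $e^{\frac{ir}{h^2}\cos(zh)}$, and on a vertical line $\Re z=x_0$ one has $\big|e^{\frac{ir}{h^2}\cos(zh)}\big|=e^{\frac{r}{h^2}\sin(x_0h)\sinh(yh)}$, since $\Re\big(\tfrac{ir}{h^2}\cos((x_0h)+iyh)\big)=\tfrac{r}{h^2}\sin(x_0h)\sinh(yh)$. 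Thus all the $\cosh(yh)$ terms in the hypotheses of Theorem 2.1 disappear, which is exactly what produces the simplified bounds in the corollary.

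Next I would match the four vertical lines. With $b=0$, $\theta=\pi/2$ the four abscissae $\frac{b\mp\theta\pm(\pi/2+\delta)}{h}$ appearing in Theorem 2.1 become $\frac{\delta}{h}$, $\frac{-\pi-\delta}{h}$, $\frac{\pi+\delta}{h}$, $\frac{-\delta}{h}$. Since the bound $|\hat g_h(k)|\le CI_k(1/sh^2)$ forces $g_h$ to be $2\pi/h$-periodic (as recalled just before Theorem 2.1), the line $\frac{-\pi-\delta}{h}$ may be replaced by $\frac{\pi-\delta}{h}$. Using $\sin(-\delta)=\sin(\pi+\delta)=-\sin\delta$ and $\sin(\pi-\delta)=\sin\delta$, the four exponents of Theorem 2.1 collapse precisely to $\pm\frac{r}{h^2}\sin\delta\,\sinh(yh)$ with the signs and the half-planes ($y\le0$ or $y\ge0$) exactly as in the statement above. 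Hence the hypotheses of the corollary are literally those of Theorem 2.1 for the chosen parameters, and conclusions (1) and (3) follow at once.

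For conclusion (2), Theorem 2.1 yields $g_h(z)=Ce^{\frac{u}{h^2}\cos(zh-b)}=Ce^{\frac{ir}{h^2}\cos(zh)}$; comparing this with the expansion
\[
\frac{h}{\sqrt{2\pi}}\sum_{k\in\mathbb{Z}}I_k(u/h^2)e^{ihkz}=\frac{h}{\sqrt{2\pi}}e^{u\cos(zh)/h^2}
\]
recorded in Section 2 (with $u=ir$) and invoking uniqueness of Fourier coefficients, I would conclude $\hat g_h(k)=CI_k(ir/h^2)$, the factor $h/\sqrt{2\pi}$ being absorbed into $C$.

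The only genuine work is the sign-and-periodicity bookkeeping in the second step — verifying that each of the four lines and each exponent in Theorem 2.1 reduces, after the substitution $b=0$, $\theta=\pi/2$, to exactly the corresponding line and exponent in the corollary (modulo the $2\pi/h$ shift on the leftmost line). There is no new analytic content beyond Theorem 2.1, so I expect that to be the only place where any care is required.
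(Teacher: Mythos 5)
Your proposal is correct and is exactly the paper's route: the paper offers no separate proof of this corollary, simply presenting it as Theorem 2.1 specialized to $b=0$, $\theta=\pi/2$ (so $u=ir$), and your sign/periodicity bookkeeping — including the $2\pi/h$ shift identifying the line $\frac{-\pi-\delta}{h}$ with $\frac{\pi-\delta}{h}$ and the vanishing of the $\cosh(yh)$ terms since $\Re u=0$ — checks out. The only point worth making explicit is that the periodicity used for that shift is itself guaranteed by the Fourier-coefficient hypothesis, as recalled in Section 2.
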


This is the version we are going to use in order to have a discrete version of Hardy's theorem. As we have pointed out in Section 3, it is not reasonable to think that the condition on the coefficients $\alpha$ and $\beta$ that measure the decay of the solution at each time will be the same as in the classical result, so we can only talk about weaker versions in this setting. More precisely, we have the following result:

\begin{teo}
Let $f_m^h(t)$ be a solution to the discrete Schr\"odinger equation \eqref{sch}, and assume that $|f_k^h(0)|\le \frac{I_k(\alpha/h^2)}{I_0(\alpha/h^2)},\ |f_k^h(1)|\le \frac{I_k(\beta/h^2)}{I_0(\beta/h^2)}$ with $\alpha$ and $\beta$ positive numbers satisfying $\alpha+\beta<2,$ then $f^h=(f_k^h)\equiv0$. 
\end{teo}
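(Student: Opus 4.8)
The plan is to put the datum at time $t=1$ into the form required by Corollary 4.1. Regard $f_k^h(t)$ as the Fourier coefficients of a $2\pi/h$-periodic function $g_h(\cdot,t)$; since $I_k(\cdot/h^2)$ decays super-exponentially in $k$, both decay hypotheses force $g_h(\cdot,0)$ and $g_h(\cdot,1)$ to extend to entire functions, and the identity $g_h(z,1)=e^{2i(\cos(zh)-1)/h^2}g_h(z,0)$ (the symbol of the discrete Schr\"odinger flow, analytically continued from the real axis) holds for all $z\in\mathbb C$. The point is that along the vertical lines where the extremal $e^{(ir/h^2)\cos(zh)}$ decays, the factor $e^{2i(\cos(zh)-1)/h^2}$ decays as well, so $g_h(\cdot,1)$ decays there like an extremal with a strictly smaller parameter; this is why Corollary 4.1 has to be applied to the datum at $t=1$, not at $t=0$.

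First I would record the pointwise bound $|g_h(z,0)|\le C\,e^{(\alpha/h^2)\cosh(h\,\Im z)}$, which follows from $|f_k^h(0)|\le I_k(\alpha/h^2)$ and the generating identity $\sum_k I_k(t)e^{-hky}=e^{t\cosh(hy)}$. Writing $z=x+iy$ and using $|e^{2i(\cos(zh)-1)/h^2}|=e^{(2/h^2)\sin(xh)\sinh(yh)}$, this gives
\[
|g_h(x+iy,1)|\le C\,e^{\frac{2}{h^2}\sin(xh)\sinh(yh)+\frac{\alpha}{h^2}\cosh(yh)}.
\]
On each of the four lines of Corollary 4.1 with $b=0$ and $\theta=\pi/2$, namely $x=\pm\delta/h$ and $x=(\pi\pm\delta)/h$, one has $|\sin(xh)|=\sin\delta$, and substituting $t=\mp yh\ge0$ reduces the excess factor to $e^{\psi(t)/h^2}$ with $\psi(t)=\alpha\cosh t-(2-r)\sin\delta\,\sinh t$. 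Since $\psi$ is nonincreasing on $[0,\infty)$ (hence $\psi(t)\le\psi(0)=\alpha$) exactly when $r<2-\alpha/\sin\delta$, it follows that $g_h(\cdot,1)$ satisfies all four boundary estimates of Corollary 4.1 with this value of $r$. On the other hand $|\widehat{g_h(\cdot,1)}(k)|=|f_k^h(1)|\le I_k(\beta/h^2)$, so the hypothesis on the Fourier coefficients holds with $s=1/\beta$.

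To conclude, since $\alpha+\beta<2$ we have $\beta<2-\alpha$, so we may fix $\delta\in(0,\pi/2)$ close enough to $\pi/2$ that $\beta<2-\alpha/\sin\delta$, and then choose $r$ with $\beta<r<2-\alpha/\sin\delta$. With these choices Corollary 4.1 applies to $g_h(\cdot,1)$ and $rs=r/\beta>1$, so part (3) of the corollary yields $g_h(\cdot,1)\equiv0$, i.e.\ $f_k^h(1)=0$ for every $k$; since the symbol $e^{2i(\cos(xh)-1)/h^2}$ never vanishes, the identity of the first step forces $g_h(\cdot,0)\equiv0$, hence $f_k^h(0)=0$ for every $k$, and then $f^h\equiv0$ by the evolution formula.

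The only delicate step is the exponent bookkeeping in the middle paragraph: one must recognise that the corollary has to be invoked at time $t=1$, match the bound on $g_h(\cdot,1)$ precisely to the four inequalities in Corollary 4.1 (the two lines near $x=\pi/h$ reducing to the same computation), and keep track of how the admissible window for $r$ shrinks as $\delta$ leaves $\pi/2$ — it is exactly the chain $\beta<r<2-\alpha/\sin\delta\to 2-\alpha$ that singles out the threshold $\alpha+\beta<2$.
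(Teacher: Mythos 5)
Your proposal is correct and follows essentially the same route as the paper: pass to the periodic extension, bound $|g_h(z,1)|$ by $Ce^{\frac{2}{h^2}\sin(xh)\sinh(yh)+\frac{\alpha}{h^2}\cosh(yh)}$, check the four boundary lines of Corollary 4.1 by showing the excess $\alpha\cosh t-(2-r)\sin\delta\sinh t$ stays bounded when $r<2-\alpha/\sin\delta$, and then pick $\delta$ near $\pi/2$ and $r\in(\beta,\,2-\alpha/\sin\delta)$ so that $rs=r/\beta>1$. This is the same choice of parameters as the paper's $\delta_\epsilon$, $r_\epsilon=2-\alpha-\epsilon$, just organized slightly differently, and your final step (nonvanishing symbol forces $g_h(\cdot,0)\equiv0$) is a harmless addition.
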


\textit{Proof.} We define the function $g_h(x,t)$ such that $\hat{g}_h(k,t)=f_k^h(t).$ As we have seen in Section 2, the decay of the initial datum implies that the periodic function $g_h$ can be extended to a entire function and that 
\[
|g_h(z,0)|\le \frac{h}{\sqrt{2\pi}I_0(\alpha/h^2)}e^{\frac{\alpha}{h^2}\cosh(yh)},
\]
for $z=x+iy$. Therefore, solving the discrete Schr\"odinger equation in this periodic setting, since it is a multiplication by another periodic and entire function, we have that at time $t=1$ the corresponding function $g_h(z,1)$ is going to be periodic and entire as well. Moreover,
\[
|g_h(z,1)|=|e^{\frac{2i}{h^2}\cos(zh)}g_h(z,0)|\le\frac{h}{\sqrt{2\pi}I_0(\alpha/h^2)} e^{\frac{2}{h^2}\sin(xh)\sinh(yh)+\frac{\alpha}{h^2}\cosh(yh)}.
\]

The key point here is how to choose $\delta$ in the corollary in order to hold all the hypotheses. Since $\alpha+\beta<2$, there is $\epsilon$ such that $\alpha+\beta<2-\epsilon$, so $\alpha<2-\epsilon$. For this $\epsilon$ we take $\delta_\epsilon$ close but less than $\pi/2$ such that $\frac{1}{\sin\delta_\epsilon}=1+\frac{\epsilon}{\alpha}.$ and $r_\epsilon=2-\frac{\alpha}{\sin\delta_\epsilon}=2-\alpha-\epsilon>0.$ Now, if $y\le0$,
\[\begin{aligned}
\left|g_h\left(\frac{\delta_\epsilon}{h}+iy,1\right)\right|&\le \frac{h}{\sqrt{2\pi}I_0(\alpha/h^2)}e^{\frac{2}{h^2}\sin\delta_\epsilon\sinh(yh)+\frac{\alpha}{h^2}\cosh(yh)}\\
&= \frac{h}{\sqrt{2\pi}I_0(\alpha/h^2)}e^{\frac{\alpha}{h^2}e^{yh}+\frac{r_\epsilon}{h^2}\sin\delta_\epsilon\sinh(yh)}\le  \frac{h}{\sqrt{2\pi}I_0(\alpha/h^2)}e^{\frac{\alpha}{h^2}+\frac{r_\epsilon}{h^2}\sin\delta_\epsilon\sinh(yh)}, 
\end{aligned}\]

so we have exactly the kind of bounds we use as hypothesis in Corollary 4.1. In order to get the result, we may let the constant $C$ in the corollary depending on $h$ and $\alpha$. However, using the asymptotic expression of $I_0(t)$ we get rid of the dependence in $h$, since we have, at least for $h$ small enough
\[
\frac{h}{\sqrt{2\pi}I_0(\alpha/h^2)}e^{\frac{\alpha}{h^2}}\le \frac{3\sqrt{\alpha}}{2}.
\]

It is easy to check that for the other three lines in the plane we need to consider we can do the same, so $g_h(z,1)$ satisfies the hypotheses of the corollary with $r=r_\epsilon$. On the other hand, the decay of the Fourier coefficients at time 1 implies that $s=\frac{1}{\beta}$ and again the constan $C$ in the statement of the corollary is independent of $h$. Finally, we have that,
\[
r s=\frac{2-\alpha-\epsilon}{\beta}>1\Longleftrightarrow \alpha+\beta<2-\epsilon,
\]
so, by the corollary $g_h(z,1)\equiv0\Rightarrow g_h(z,t)\equiv0$.\qed

\begin{rem}
The condition on $\alpha$ and $\beta$ is, as it should be expected, weaker than the original condition $\alpha\beta<4$. Actually $\alpha+\beta<2\Rightarrow \alpha\beta<1$. Going back to the example we considered in Section 3 to show that there are nonzero functions satisfying the hypothesis of Theorem 3.1, recall that then $\alpha=1/2$ and $\beta=5/2$, so in that case $\alpha+\beta=3$, hence, that example does not make any contradiction with this result.
\end{rem}

\begin{rem} We can easily see that the condition $\alpha+\beta<2$ is optimal, in the sense that we can give examples of solutions satisfying the hypothesis when $\alpha+\beta=2$. Indeed, consider $f_k^h(0)=\frac{I_k(-i/h^2)}{I_0(1/h^2)}$ and then
\[\begin{aligned}
f_k^h(t)&=\frac{e^{-2it/h^2}}{I_0(1/h^2)}\sum_{m=\infty}^\infty I_m(-i/h^2)I_{k-m}(2it/h^2)\\& \Rightarrow f_k^h(1)=e^{-2it/h^2}\frac{I_k(i/h^2)}{I_0(1/h^2)}.
\end{aligned}\]

Hence, we have a non-zero function that satisfies the decay conditions with $\alpha=\beta=1$. On the other hand, looking at Theorem 3.1 we have that if the function $u_0$ of the hypothesis \eqref{mesh} exists, it has to be zero, and it is as easy to check as in the example in Section 3 that this is the case. 
\end{rem}

Roughly speaking, what this theorem is saying is that in order to have a nonzero solution, if one of the coefficients is small, then the best possible coefficient one can have at time 1 is close to 2, thinking about being "best" as being the smallest possible coefficient, since the smaller the coefficient $\alpha$ or $\beta$, the better the decay. On the other hand what the classical Hardy's principle states is that the smaller one coefficient is, the bigger the other coefficient has to be in order to have a nonzero function.

Considering now the heat equation, we can use the same argument as for the Schr\"odinger equation in order to have a very similar result, again asking about decay conditions at two different times. Now solving the heat equation is equivalent to multiply in the periodic setting by $e^{\cos(xh)}$, so we use Theorem 2.1 with $b=0$ and $\theta=0$ in order to have:

\begin{cor}
Assume that $g_h$ is a complex-valued function and that there are $r,s>0,\delta\in(0,\pi/2)$ such that, for all $y\in\mathbb{R}$,
\[\begin{aligned}
&\left|g_h\left(\frac{\pi/2+\delta}{h}+i y\right)\right|,\left|g_h\left(-\frac{\pi/2+\delta}{h}+iy\right)\right|\le Ce^{-\frac{r}{h^2}\sin\delta\cosh(yh)},\\
&|\hat{g}_h(k)|\le CI_k\left(\frac{1}{sh^2}\right),\ \ \forall k\in\mathbb{Z}.\end{aligned}
\]

Then:
\begin{enumerate}
\item $rs<1\Rightarrow$ There are nonzero functions that satisfy the hypotheses.
\item $rs=1\Rightarrow g_h=Ce^{\frac{r}{h^2}\cos(zh)}$ (and $\hat{g}_h(k)=CI_k(r/h^2)$).
\item $rs>1\Rightarrow g_h\equiv 0.$ 

\end{enumerate}
\end{cor}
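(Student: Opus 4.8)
\textit{Proof proposal.} The plan is to obtain this corollary as the specialization of Theorem 2.1 to the parameters $b=0$ and $\theta=0$, in the same way Corollary 4.1 is the case $b=0$, $\theta=\pi/2$. First I would check that, under this choice, the hypotheses of the corollary are literally those of the theorem. With $\theta=0$ we have $u=r\in(0,\infty)$, hence $\Re u=r$ and $\Im u=0$, so every $\sinh(yh)$-term in the four boundary estimates of Theorem 2.1 disappears; moreover the four abscissas $\frac{b\pm\theta\pm(\pi/2+\delta)}{h}$ collapse to the two values $\pm\frac{\pi/2+\delta}{h}$, and the two conditions imposed for $y\le 0$ together with the two imposed for $y\ge 0$ combine to give, on each of these two lines, a bound valid for all $y\in\mathbb{R}$. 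Since $\cos(\tfrac\pi2+\delta)=-\sin\delta$, each of the four exponential bounds becomes $Ce^{-\frac{r}{h^2}\sin\delta\cosh(yh)}$, which is exactly the first hypothesis of the corollary; the condition on the Fourier coefficients is unchanged.

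Granting this, the three conclusions are immediate. For $rs>1$, part (3) of Theorem 2.1 gives $g_h\equiv0$. For $rs=1$, part (2) gives $g_h(z)=Ce^{\frac{u}{h^2}\cos(zh-b)}=Ce^{\frac{r}{h^2}\cos(zh)}$; matching this with the identity $\frac{h}{\sqrt{2\pi}}\sum_{k}I_k(r/h^2)e^{ihkz}=\frac{h}{\sqrt{2\pi}}e^{\frac{r}{h^2}\cos(zh)}$ recalled before Theorem 2.1, and with the definition of the Fourier coefficients of $g_h$, I would read off $\hat g_h(k)=CI_k(r/h^2)$ after absorbing the factor $\sqrt{2\pi}/h$ into the constant.

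For $rs<1$ I would simply exhibit a nonzero example rather than appeal to part (1) abstractly; the natural candidate is $g_h(z)=e^{\frac{r}{h^2}\cos(zh)}$, which is entire and $2\pi/h$-periodic. Its Fourier coefficients are a (harmless, possibly $h$-dependent) multiple of $I_k(r/h^2)$, and since $rs<1$ means $r<1/s$ while $x\mapsto I_k(x)$ is nonnegative and increasing on $[0,\infty)$, one has $I_k(r/h^2)\le I_k(1/(sh^2))$, giving the third hypothesis. On the lines $z=\pm\frac{\pi/2+\delta}{h}+iy$ a direct computation gives $\Re\cos\bigl(\pm(\tfrac\pi2+\delta)+iyh\bigr)=-\sin\delta\cosh(yh)$, so $|g_h|=e^{-\frac{r}{h^2}\sin\delta\cosh(yh)}$ there, which is the first hypothesis with constant $1$. (Alternatively, one may specialize the appendix examples for Theorem 2.1 to $b=\theta=0$.)

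There is no serious obstacle here; the only point deserving attention is the bookkeeping in the first step, namely verifying that the degeneration of the four boundary lines into two is exactly what $\theta=0$ produces, that no range of $y$ is lost in this collapse, and that the normalization factor $\sqrt{2\pi}/h$ between the two Fourier conventions is correctly absorbed so that the clean statement $\hat g_h(k)=CI_k(r/h^2)$ is legitimate.
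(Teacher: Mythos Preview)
Your proposal is correct and follows exactly the paper's approach: the corollary is obtained by specializing Theorem~2.1 to $b=0$, $\theta=0$, which is precisely what the paper states in the sentence immediately preceding the corollary. Your careful bookkeeping (the collapse of the four lines into two, the vanishing of the $\sinh$-terms since $\Im u=0$, the identity $\cos(\pi/2+\delta)=-\sin\delta$) and your explicit example for $rs<1$ simply spell out details the paper leaves implicit.
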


And by means of this Corollary, we have the result: 
\begin{teo} Let $v_m^h(t)$ be a solution to the discrete heat equation \eqref{sch}, and assume that $|v_k^h(0)|\le \frac{I_k(\alpha/h^2)}{I_0(\alpha/h^2)},\ |v_k^h(1)|\le \frac{I_k(\beta/h^2)}{I_0(\beta/h^2)}$ and $\alpha$ and $\beta$ are positive numbers satisfying $\alpha+\beta<2,$ then $v^h=(v_k^h)\equiv0$. 
\end{teo}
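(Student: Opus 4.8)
\emph{Proof proposal.} The plan is to run exactly the argument of Theorem 4.1, replacing the Schr\"odinger multiplier $e^{2i(\cos(zh)-1)/h^2}$ by the heat multiplier $e^{2(\cos(zh)-1)/h^2}$ and invoking Corollary 4.2 instead of Corollary 4.1. First I would set $\hat g_h(k,t)=v_k^h(t)$. As in the proof of Theorem 4.1, the bound $|v_k^h(0)|\le I_k(\alpha/h^2)/I_0(\alpha/h^2)$ (Bessel decay being faster than any $e^{-a|k|}$, cf. Section 2) forces $g_h(\cdot,0)$ to extend to an entire $2\pi/h$-periodic function with
\[
|g_h(z,0)|\le \frac{h}{\sqrt{2\pi}\,I_0(\alpha/h^2)}\,e^{\frac{\alpha}{h^2}\cosh(yh)},\qquad z=x+iy.
\]
Rewriting the discrete heat equation in the periodic setting turns it into $\partial_t g_h(x,t)=\frac{2(\cos(xh)-1)}{h^2}g_h(x,t)$, so $g_h(z,1)=e^{2(\cos(zh)-1)/h^2}g_h(z,0)$, which is again entire and $2\pi/h$-periodic since $\cos(zh)$ is. Using $\Re\cos(zh)=\cos(xh)\cosh(yh)$ one obtains
\[
|g_h(z,1)|\le \frac{h}{\sqrt{2\pi}\,I_0(\alpha/h^2)}\,e^{\frac{2}{h^2}(\cos(xh)\cosh(yh)-1)+\frac{\alpha}{h^2}\cosh(yh)}.
\]

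Next I would choose the aperture $\delta$. Fix $\epsilon>0$ with $\alpha+\beta<2-\epsilon$, and then $\delta_\epsilon\in(0,\pi/2)$ with $\frac{1}{\sin\delta_\epsilon}=1+\frac{\epsilon}{\alpha}$, so that $r_\epsilon:=2-\frac{\alpha}{\sin\delta_\epsilon}=2-\alpha-\epsilon>0$. On the lines $x=\pm\frac{\pi/2+\delta_\epsilon}{h}$ one has $\cos(xh)=-\sin\delta_\epsilon$, hence
\[
\frac{2}{h^2}\bigl(-\sin\delta_\epsilon\cosh(yh)-1\bigr)+\frac{\alpha}{h^2}\cosh(yh)=-\frac{r_\epsilon}{h^2}\sin\delta_\epsilon\cosh(yh)-\frac{2}{h^2}.
\]
Since $\alpha-2\sin\delta_\epsilon<0$, the coefficient of $\cosh(yh)$ is negative, so this bound decays in $|y|$ for all $y\in\mathbb{R}$ and is symmetric in the two lines, which is precisely what Corollary 4.2 demands. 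Absorbing the harmless factor $e^{-2/h^2}$ and the prefactor $\frac{h}{\sqrt{2\pi}\,I_0(\alpha/h^2)}$ (uniformly bounded, indeed tending to $0$, as $h\to0$, by the asymptotics of $I_0$, just as in Theorem 4.1) into a constant, $g_h(\cdot,1)$ satisfies the hypotheses of Corollary 4.2 with $r=r_\epsilon$; and $|v_k^h(1)|\le I_k(\beta/h^2)/I_0(\beta/h^2)$ gives the Fourier-coefficient hypothesis with $s=1/\beta$.

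Finally, $rs=\frac{2-\alpha-\epsilon}{\beta}>1$ exactly because $\alpha+\beta<2-\epsilon$, so part (3) of Corollary 4.2 yields $g_h(\cdot,1)\equiv0$; since $e^{2(\cos(zh)-1)/h^2}$ never vanishes, $g_h(\cdot,0)\equiv0$, hence $g_h(\cdot,t)\equiv0$ for all $t$ and $v_k^h(t)=\hat g_h(k,t)=0$ for every $k$ and $t$. I do not anticipate a real obstacle beyond sign bookkeeping: the one substantive difference from the Schr\"odinger proof is that $\Re\cos(zh)$ contributes a $\cosh(yh)$ term rather than $\sinh(yh)$ terms, so a single symmetric pair of vertical lines valid for all $y\in\mathbb{R}$ suffices (matching the form of Corollary 4.2), provided one verifies that the net coefficient of $\cosh(yh)$ is negative after fixing $\delta_\epsilon$; the extra $e^{-2/h^2}$ only improves the constant, and the uniform-in-$h$ control of the prefactor is identical to the one already used for the Schr\"odinger equation.
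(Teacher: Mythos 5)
Your proposal is correct and follows exactly the route the paper takes: the paper's own proof of this theorem is literally the one-line remark that one repeats the argument of Theorem 4.1 with the heat multiplier $e^{2(\cos(zh)-1)/h^2}$ and Corollary 4.2 in place of Corollary 4.1, which is precisely what you have carried out. Your sign bookkeeping is right — $\Re\cos(zh)=\cos(xh)\cosh(yh)$ gives the symmetric $\cosh$-decay on the two lines $x=\pm(\pi/2+\delta_\epsilon)/h$ required by Corollary 4.2, the choice $\sin\delta_\epsilon=\alpha/(\alpha+\epsilon)$ yields $r_\epsilon s=(2-\alpha-\epsilon)/\beta>1$, and the extra factor $e^{-2/h^2}$ is indeed harmless.
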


\begin{proof} The proof of this theorem follows the same argument as Theorem 4.1, just rewriting it for solutions to the discrete heat equation instead of the discrete Schr\"odinger equation.

\end{proof}

Moreover, we can see again that the condition $\alpha+\beta<2$ is optimal, but now we can construct examples that are not discrete versions of the identically zero function and are close to the optimal condition $\alpha+\beta=2$. For instance, if we take the function $u_0(x)=e^{-x^2/2\epsilon}$, then $u(x,t)=e^{t\partial_{xx}}u_0(x)$ is the function given by
\[
u(x,t)=\frac{e^{-x^2/(4t+2\epsilon)}}{\sqrt{2t/\epsilon+1}},
\]
so in this case, we can take $\delta$ on Hardy's uncertainty principle for the heat equation bigger than, but as close as we want to $2+\epsilon$. Then, if we take a discrete version by using the modified Bessel functions, we have a result with $\alpha=\epsilon$ and $\beta=2+\epsilon$, so $\alpha+\beta$ is as close as we want to the optimal value $2$.

\section{Appendix: Examples}

In this Appendix we are going to see some examples of nonzero functions that satisfy the hypotheses of Theorem 2.1 when we are in the first case of the theorem $rs<1$.

\textbf{First example}

The first example, and the most basic example, is given by the function $f_h(z)=Ce^{\frac{v}{h^2}\cos(zh-b)}$, where $v=ae^{i\theta},$ for $r<a<1/s$.

\textbf{Second example}

A calculation shows that the Fourier coefficient of $Ce^{\frac{u}{h^2}\cos(zh-b)}$ is $I_k(u/h^2)e^{-ikb}$. Here we are going to take $u$ such that $|u|=1$ and $a>1$, in order to define the function whose Fourier coefficients are
\[
\hat{f_h}(k)=(ae^{-ibh})^k I_k\left(\frac{u}{a h^2}\right).
\]

This function satisfies the hypotheses for $s=1$ and $\tilde{u}=\frac{u}{a^2}$. Indeed, one can compute the function $f_h$ to get
\[
f_h(z)=e^{\frac{u}{2}(e^{i(zh-b)}+e^{-i(zh-b)}/r^2)},
\]
and now it is easy to check that all the hypotheses are satisfied.

\textbf{More examples}

In some particular cases, like the particular cases of Corollary 4.1 or Corollary 4.2, we can get more examples by studying the evolution of discrete equations. For instance, if we assume that we are in the case of the discrete heat equation, that is, we have $b=\theta=0$, and, for the sake of simplicity, we assume that $h=1$, once we have an example of a function that satisfies the hypotheses, say $f$, we can generate more functions of this type by using the original function as initial datum of the discrete heat equation. Indeed, for the evolution of the Fourier coefficients we have
\[
\hat{f}(k,t)=e^{-2t}\sum_{m\in\mathbb{Z}}\hat{f}(m,0)I_{m-k}(2t)\Rightarrow |\hat{f}(k,t)|\le I_{k}\left(\frac{1}{s}+2t\right).
\]

On the other hand
\[
|f(\pi/2+\delta+iy,t)|=e^{-2t-(2t+r)\sin\delta\cosh(y)},
\]
so the function $f(z,t)$ also satisfy the hypotheses of the corollary, with $r_t=r+2t$ and $s_t=\frac{s}{1+2ts}$. Notice that since $rs<1$, we have immediately that $r_ts_t<1$ for all $t$.

Furthermore, if we have a function not necessarily on the class of functions that satisfy the hypotheses, but it satisfies the condition on the Fourier coefficients, let us say a function $g$ such that for some $a>0$,
\[
|\hat{g}(k)|\le C I_k(a),
\]
and we solve again the discrete heat equation then we have that
\[\begin{aligned}
|\hat{g}(k,t)|&\le CI_k(t+a),\\
|g(t,x+i y)|&\le Ce^{(t\cos(x)+a)\cosh y}.
\end{aligned}\]

If $t\le a$ we do not have the existence of $\delta$ such that the assumptions hold. However, if $t>2a$, there exists $\delta$ such that $t \cos(\pi/2+\delta)=a(\cos(\pi/2+\delta)-1)$ (Notice that $\cos(\pi/2+\delta)=\cos(3\pi/2+\delta)$).

Hence for those $t$ we have that
\[|f(t,\pi/2+\delta+iy)|\le Ce^{a cos(\pi/2+\delta)\cosh y}.\]

So $f(t)$ is in the class of functions which satisfy the assumptions of the corollary. Notice that the product of the two constants is $\frac{a}{t+a}<1$.

On the other hand, we can fix $\delta$ and then, when $t\ge a\frac{\cos(\pi/2+\delta)-1}{\cos(\pi/2+\delta)}$ the hypothesis holds.

\section{Acknowledgments.} This paper is part of the Ph. D. thesis of the author, who is supported by the predoctoral grant BFI-2011-11 of the Basque Government and the projects MTM2011-24054 and IT641-13. The author would like to thank L. Vega, without whose help this paper would not have been possible.

\end{document}